\renewcommand{\theequation}{\thesection.\arabic{equation}}
\definecolor{darkblue}{rgb}{0.3,0,0.7}
\newtheorem{theorem}{Theorem}[section]
\newtheorem{definition}[theorem]{Definition}
\newtheorem{prop}[theorem]{Proposition}
\newtheorem{corollary}[theorem]{Corollary}
\newtheorem{remark}[theorem]{Remark}
\numberwithin{equation}{section}
\renewcommand\appendix{\par
  \setcounter{section}{0}
  \setcounter{subsection}{0}
  \setcounter{figure}{0}
  \setcounter{table}{0}
  \renewcommand\thesection{Appendix \Alph{section}}
  \renewcommand\theequation{\Alph{section}.\arabic{equation}}
  \renewcommand\thefigure{\Alph{section}.\arabic{figure}}
  \renewcommand\thetable{\Alph{section}.\arabic{table}}
  \renewcommand\thethm{\Alph{section}.\arabic{thm}}
}
\newcommand{\veps}{\varepsilon}
\newcommand{\ptl}{\partial}
\newcommand{\br}{\mathbb{R}}
\newcommand{\ii}{\mathrm{i}}
\newcommand{\bx}{\mathbf{x}}
\newcommand{\by}{\mathbf{y}}
\newcommand{\vt}{\hat{\mathbf{t}}}
\newcommand{\vn}{\hat{\mathbf{n}}}
\numberwithin{equation}{section}
\date{}
\title{Traveling edge states in massive Dirac equations along slowly varying edges}
\author{
Pipi Hu\thanks{\footnotesize Yau Mathematical Sciences Center, Tsinghua University, Beijing 100084 and Yanqi Lake Beijing Institute of Mathematical Sciences and Applications, Beijing 101408, China (\href{mailto:hpp1681@gmail.com}{hpp1681@gmail.com}).} 
\qquad Peng Xie\thanks{\footnotesize 
Department of Mathematics, The Hong Kong University of Science and Technology, Clear Water Bay, Kowloon, Hong Kong SAR (\href{mailto:mapengxie@ust.hk}{mapengxie@ust.hk}).}
\qquad Yi Zhu\thanks{\footnotesize 
Yau Mathematical Sciences Center, Tsinghua University, Beijing 100084 and Yanqi Lake Beijing Institute of Mathematical Sciences and Applications, Beijing 101408, China (\href{mailto:yizhu@tsinghua.edu.cn}{yizhu@tsinghua.edu.cn}).}}
\begin{document}
\maketitle

\begin{abstract}
Topologically protected wave motion has attracted considerable interest due to its novel properties and potential applications in many different fields. In this work, we study edge modes and traveling edge states via the linear Dirac equations with so-called domain wall masses. The unidirectional edge state provides a heuristic approach to more general traveling edge states through the localized behavior along slowly varying edges. We show the leading asymptotic solutions of two typical edge states that follow the circular and curved edges with small curvature by analytic and quantitative arguments.
\end{abstract}

\textbf{Keywords}: massive Dirac equation, chirality, edge states, asymptotic solution.

\setcounter{equation}{0}
\setlength{\arraycolsep}{0.2em}

\section{Introduction}

The topological wave phenomena have sparked an explosion of the  interface features between distinct topological insulators \cite{ablowitz2013localized,drouot2020edge,fefferman2016bifurcations,fefferman2016edge,hasan2010topological}. One striking character of the so-called edge states is the existence of chiral propagating waves which are immune to the local defects in the sense of waves retaining on the edge robustly. This immunity is a delicate property in applied perspectives and it can be contributed to interpreting many ubiquitous physical scenarios. These studies are not only investigated by the electronic waves in condensed matter physics but also rapidly extended to photonics, water waves and related subjects \cite{delplace2017topological,fleury2016floquet,graf2021topology,lu2014topological,mousavi2015topologically,susstrunk2015observation,witten2016three,wu2018topological}.

In current work, we consider the dynamics of the edge state described by the two-component Dirac equation with a varying mass in the following canonical form:
\begin{equation}\label{eq.simple}
	\ii\partial_t\begin{pmatrix}
		\alpha_1 \\
		\alpha_2 \\
	\end{pmatrix}+\begin{pmatrix}
		\ii\partial_{x_2} & m(\bx)-\partial_{x_1}\\
		m(\bx)+\partial_{x_1}& -\ii\partial_{x_2}
	\end{pmatrix}
	\begin{pmatrix}
		\alpha_1 \\
		\alpha_2 \\
	\end{pmatrix}=0.
\end{equation}
where $\alpha_j=\alpha_j(t, \bx),~ j=1,2$ are complex-valued wave functions, $m(\bx) \in C(\mathbb{R}^2, \mathbb{R})$ is the mass term, and $``\mathrm{i}"$ is the imaginary unit. One can directly verify that the total energy $\mathcal{E}(t)=\int_{\mathbb{R}^2}|\alpha_1(t, \bx)|^2+|\alpha_2(t, \bx)|^2 d\bx$ is conserved since the Dirac operator behaves as a Hamiltonian. It also admits the global existence for the smooth solution with a smooth initial condition.

In homogeneous honeycomb latticed materials, Dirac points regularly appear at the spectrum band structure with the corresponding quasi-periodic eigenmodes and the wave packets around this degenerated point are dominated by the massless Dirac equation \cite{ablowitz2009conical,ablowitz2012nonlinear,fefferman2012honeycomb,fefferman2014wave,geim2007The,lee2019elliptic,neto2009electronic,novoselov2005two,raghu2008analogs}. However, this conical intersection disappears if time-reversal symmetry is broken in the material and then a local band gap emerges in the essential spectrum which leads to the insulating bulk \cite{fefferman2012honeycomb,haldane2008Possible,hasan2010topological,rechtsman2013Photonic}. The Dirac equation with a varying mass \eqref{eq.simple} arises from the effective envelopes of wave propagation in topological materials. Here, the mass $m(\bx)$ determines the distinct topology such that two adherent materials are topological insulators in bulks and the current or electromagnetic wave is permitted to travel along the contact edge \cite{bernevig2013topological,hu2020linear,raghu2008analogs,xie2019wave}. The associated edge, null domain of $m(\bx)$, separates the two dimensional materials with different topology in each part. Moreover, this novel electric conductivity elucidates the chirality and one unidirectional localized current flows along the edge only. Recently, the spectrum structure in honeycomb latticed medium also fascinates lots of attention from mathematical viewpoints. A variety of rigorous research has studied the existence of Dirac points and the local band gap brought after a time reversal symmetry breaking perturbation in domain wall and tight binding models \cite{fefferman2017topologically,fefferman2012honeycomb,keller2018spectral,lee2019elliptic,xie2021wave}. Meanwhile, one dimensional topologically protected edge state always occurs at the band gap when two adjacent medium state the distinct topological invariant associated with the Zak phase and Chern number \cite{ammari2020robust,ammari2020topologically,bal2019topological,drouot2020edge,fefferman2016edge,guo2019bloch,lee2019elliptic,lin2021mathematical}. Instead of dealing with the highly oscillated interface mode directly, a canonical way is to exploit the essentially homogenized envelope emerged by the time-harmonic massive Dirac equation which inherits the topological protected properties more clearly and intuitively. Studies about the existence of edge states or the derivation of governed envelopes---Dirac equations are carried out in many settings, such as microlocal analysis, transfer matrix method, Fredholm operator index, K-theory, and so on \cite{bal2019continuous,bal2019topological,drouot2020edge,lin2021mathematical,thiang2020edge}.


In physical applications, edge modes would also travel along various shapes of the interface where bulk defects happen \cite{bandres2016topological,cheng2016robust,ma2015guiding}. These physical phenomena stimulate the interests of wave propagation along the nontrivial edges. Recently, a class of Dirac equations with a small semi-classical parameter described the wave packets which propagate along the curved edge for long times \cite{bal2022magnetic,bal2021edge}. However, these effective models depend on the small parameter in the semi-classical equation and the curvature of nearly straight edge provides the limited effect to the time validity. In the current study, we will introduce an domain wall mass term and directly elucidate the classical dynamics of edge states when the interface curvature is very small. We seek the quasi-traveling edge states propagation pinned on the curved edge from the idea of modes along the straight interface and we also exploit a delicate modulation so that the accuracy of energy estimate will be improved.

The crucial result in our development of the quasi-traveling edge state is guided by the slowly varying edge perfectly. The effectiveness has distinct linear corrections with the edge curvature square. It can be carried out through the constructively asymptotic solution which is locally raised from the case of straight mass edge. We employ a well-prepared initial condition and then show a more accurate validity of this setup via the analytic and quantitative studies. From this scenario, it enlightens that the edge states governed by the macroscopic massive Dirac equation are topologically protected. The main results of the present work are summarized here:
\begin{itemize}\setlength{\itemsep}{0.5em}
	\item For the basic straight line edge, we employ the plane wave separation to discuss the unidirectional traveling edge state that is pinned on the edge in Proposition \ref{prop.move} and Corollary \ref{corollary:travel};
	\item For the slowly varying edge, we establish two typical quasi-traveling edge states along the circle with large radius and generic curves with the small curvature in Theorem \ref{thm1} and Theorem \ref{thm.simple} respectively. We demonstrate the reliability of herein developed asymptotic solutions and the residual errors only depend on time linearly and curvature quadratically.
\end{itemize}

The rest of this article is organized as follows. We discuss the solution to the traveling edge state described by massive Dirac equation with a unidirectional edge in Section \ref{sec3}. In Section \ref{sec4} and \ref{sec5}, two typical models with general domain wall masses show that the longtime stable asymptotic solutions are heuristically solved by regarding the partial edge as a local straight line along the tangent direction.

\section{Traveling edge states with the linear mass}\label{sec3}

In the physical setup, the ``edge" (or interface) comes from the connected boundary between two topological materials, and can be described by a smooth function \cite{drouot2020edge,lee2019elliptic}. For this sake, we define the domain wall mass term below.
\begin{definition}\label{def.edge}
	The mass term $m(\bx)$ is called the domain wall function if $m(\bx)$ is piecewise continuous, and can be written as $m(\bx)=\tilde{m}(u)$, $u=f(\bx)$ which satisfies:
	\begin{enumerate}
		\item $\tilde{m}(\cdot)\in L^\infty(\mathbb{R})$ is a monotonic transition function, $\tilde{m}(0)=0$, and the level set $\Gamma=\{\bx\in\br^2:f(\bx)=0\}=\tilde{m}^{-1}(0)$ represents a smooth curve;
		\item $\lim\limits_{u\to\pm \infty}\tilde{m}(u)=\pm m_\infty, \; m_\infty>0$ and $m(\bx)$ approaches to $\pm m_{\infty}$ rapidly away from $\Gamma$.
	\end{enumerate}
\end{definition}

Here, we illustrate two typical examples of domain wall functions from the literature \cite{fefferman2017topologically,fefferman2012honeycomb,lee2019elliptic} as follows:
\begin{equation*}
	\tilde{m}(u) = \tanh(u);\quad\quad \tilde{m}(u) =
	\begin{cases}
		-1, & \text{for } u < 0, \\
		0, & \text{for } u = 0, \\
		1, & \text{for } u > 0. \\
	\end{cases}
\end{equation*}
Henceforth we will drop the tilde and study a specified domain wall term for simplicity, i.e., the mass term is denoted by $m(f(\bx))$ with the edge $\Gamma=\{\bx\in\br^2:f(\bx)=0\}$.


We reveal the edge state with a linear edge curve $\Gamma$ and the Dirac equation has an explicit traveling wave solution. To this end, we assume that $m(f(\bx))=m(\vn\cdot\bx)$ where $\vn$ is a unit normal vector to the edge and $m(u)$ rapidly approaches to $\pm m_{\infty}$ as $u=\vn\cdot\bx\rightarrow \pm\infty$. Figure \ref{fig:sa} shows a vertical straight edge and Figure \ref{fig:sb} is a general linear case. We rewrite \eqref{eq.simple} under the linear edge form:
\begin{equation}\label{eq.line}
	\ii\partial_t
	\begin{pmatrix}
		\alpha_1 \\
		\alpha_2 \\
	\end{pmatrix}+\begin{pmatrix}
		\ii\partial_{x_2} & m(\vn\cdot\bx)-\partial_{x_1}\\
		m(\vn\cdot\bx)+\partial_{x_1}& -\ii\partial_{x_2}
	\end{pmatrix}
	\begin{pmatrix}
		\alpha_1 \\
		\alpha_2 \\
	\end{pmatrix}=0.
\end{equation}
Further, the tangent vector $\vt$ is obtained by rotating $\vn$ 90-degree counterclockwise. We introduce the azimuth $\theta\in[0,2\pi)$ to $x-$axis, and then $\vn$, $\vt$ can be represented as follow,
\begin{equation}\label{eq.cor}
	\vn = \begin{pmatrix}
		\cos\theta\\
		\sin\theta\\
	\end{pmatrix},\quad \quad
	\vt = \begin{pmatrix}
		-\sin\theta\\
		\cos\theta\\
	\end{pmatrix}.
\end{equation}

\begin{figure}[htbp]
	\centering
	\begin{subfigure}{0.3\textwidth}
		\includegraphics[width=\textwidth]{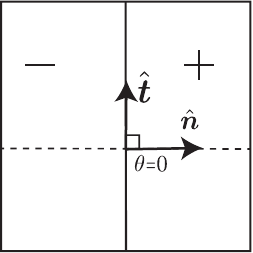}
		\caption{}\label{fig:sa}
	\end{subfigure}
	\begin{subfigure}{0.3\textwidth}
		\includegraphics[width=\textwidth]{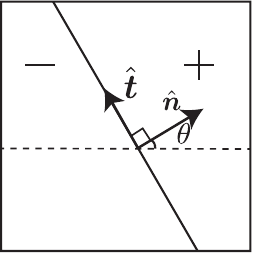}
		\caption{}\label{fig:sb}
	\end{subfigure}
	\begin{subfigure}{0.3\textwidth}
		\includegraphics[width=\textwidth]{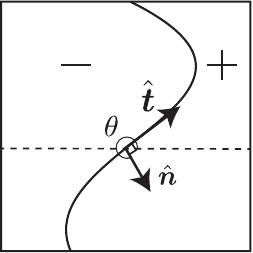}
		\caption{}\label{fig:sc}
	\end{subfigure}
	\caption{Left panel (A): the vertical straight line edge. Middle panel (B): the general case of straight line edge. Right panel (C): the general case of curved edge.}\label{fig.scheme}
\end{figure}

We would reveal the edge state with the linear mass and the Dirac equation \eqref{eq.line} has the plane wave solution. The result is summarized below.
\begin{prop}\label{prop.move}
	Suppose that the edge curve is a straight line with the unit normal vector $\vn$ and tangent vector $\vt$. For any parameter $k\in\mathbb{R}$, the Dirac equation \eqref{eq.line} has a plane wave solution as follows:
	\begin{equation}\label{eq.plane}
		\begin{pmatrix}
			\alpha_1\\
			\alpha_2\\
		\end{pmatrix} = \chi(\vn\cdot\bx)e^{\ii k(\vt\cdot\bx- t)}
		\begin{pmatrix}
			\cos\frac{\theta}{2}\\[0.3em]
			\ii\sin\frac{\theta}{2}\\
		\end{pmatrix},
	\end{equation}
	where $\chi(\cdot)=Ce^{-\int_0^{\cdot} m(s) ds}$ is a localized real-valued function with $C>0$ being the normalization constant.
\end{prop}

\begin{remark}
	Here, we can figure out that the traveling edge states would always propagate towards the positive direction of $\vt$ and decay along the $\vn$ direction which is the implication of the topological chirality. Before demonstrating this judgment, we would point out that $\binom{\chi(u)}{0}$ is an eigenfunction corresponding to the eigenvalue $-k$ of the one-dimensional (1D) Dirac operator
	\begin{equation}\label{op.dirac1d}
		\mathcal{D}_{k}=
		\begin{pmatrix}
			-k & m(u)-\partial_{u} \\
			m(u)+\partial_{u} & k \\
		\end{pmatrix}.
	\end{equation}
\end{remark}

\begin{proof}

We firstly perform the coordinate transformation,
	\begin{equation}\label{eq.transuv}
		u = \vn\cdot\bx,\quad v = \vt\cdot\bx,
	\end{equation}
	or equivalently by \eqref{eq.cor}
	\begin{equation*}
		\begin{pmatrix}
			u\\
			v\\
		\end{pmatrix} = \begin{pmatrix}
			\cos\theta & \sin\theta \\
			-\sin\theta & \cos\theta\\
		\end{pmatrix} \begin{pmatrix}
			x_1\\
			x_2\\
		\end{pmatrix}.
	\end{equation*}
	Let $\widetilde\alpha_j(t,u,v)=\alpha_j\big(t,\bx(u,v)\big),\, j=1,2$. The Dirac equation \eqref{eq.line} changes into
	\begin{equation}\label{eq.theta}
		\ii\partial_t\begin{pmatrix}
			\widetilde\alpha_1\\
			\widetilde\alpha_2\\
		\end{pmatrix}+
		\widetilde{\mathcal{D}}\begin{pmatrix}
			\widetilde\alpha_1\\
			\widetilde\alpha_2\\
		\end{pmatrix}=0.
	\end{equation}
	Here, the Dirac operator $\widetilde{\mathcal{D}}$ under the new coordinate is in the form of
	\begin{equation*}
		\widetilde{\mathcal{D}}=\begin{pmatrix}
			\ii(\sin\theta\partial_u+\cos\theta\partial_v) & m(u)-(\cos\theta\partial_u-\sin\theta\partial_v)\\
			m(u)+(\cos\theta\partial_u-\sin\theta\partial_v) &-\ii(\sin\theta\partial_u+\cos\theta\partial_v)
		\end{pmatrix}.
	\end{equation*}
	However, $\widetilde{\mathcal{D}}$ looks obscure by messing with $\theta$. To make it clear, we introduce a rotation transformation such that $\psi_j=\psi_j(t,u,v),~j=1,2$ satisfy
	\begin{equation}\label{eq.trans}
		\begin{pmatrix}
			\psi_1\\
			\psi_2\\
		\end{pmatrix}=S\begin{pmatrix}
			\widetilde\alpha_1\\
			\widetilde\alpha_2\\
		\end{pmatrix} \quad \text{and}~\quad 
		S=\begin{pmatrix}
			\cos\frac{\theta}{2} & -\ii\sin\frac{\theta}{2}\\[0.3em]
			-\ii\sin\frac{\theta}{2} & \cos\frac{\theta}{2}\\
		\end{pmatrix}.
	\end{equation}
	Here $S^*S=I$ and the asterisk $*$ indicates the conjugate transpose. Then, a direct calculation yields \eqref{eq.theta} into
	\begin{equation}\label{eq.stand}
		\ii\partial_t\begin{pmatrix}
			\psi_1 \\
			\psi_2 \\
		\end{pmatrix}+\begin{pmatrix}
			\ii\partial_{v} & m(u)-\partial_{u}\\
			m(u)+\partial_{u}& -\ii\partial_{v}\\\end{pmatrix}
		\begin{pmatrix}
			\psi_1 \\
			\psi_2 \\
		\end{pmatrix}=0,
	\end{equation}
	which is parallel to the standard form \eqref{eq.line} when $\theta=0$ shown in Figure \ref{fig:sa}.
	
	Substituting $\psi_j(t,u,v)=\widetilde\psi_j(u)e^{\ii k v+ \ii\omega t}$, $j=1,2$ into the above equation \eqref{eq.stand} develops an eigenvalue problem to the 1D Dirac operator,
	\begin{equation}\label{eq.eigen}
		\mathcal{D}_{k}\widetilde{\boldsymbol{\psi}}=\omega\widetilde{\boldsymbol{\psi}},\quad\quad \widetilde{\boldsymbol{\psi}}=\begin{pmatrix}
			\widetilde\psi_1 \\
			\widetilde\psi_2 \\
		\end{pmatrix},
	\end{equation}
	where the Dirac operator $\mathcal{D}_{k}$ is defined in Proposition \ref{prop.move}. Namely, we have
	\begin{align*}
		\ptl_u\widetilde{\boldsymbol{\psi}}=m(u)
		\begin{pmatrix}
			-1~ & 0 \\
			0 & 1
		\end{pmatrix}\widetilde{\boldsymbol{\psi}}+
		\begin{pmatrix}
			0 & -(k-\omega) \\
			-(k+\omega) & 0
		\end{pmatrix}\widetilde{\boldsymbol{\psi}}.
	\end{align*}
	From the right-hand side, the eigenvalues of the second matrix are $\pm\sqrt{k^2-\omega^2}$ and the corresponding eigenvectors are
	\begin{equation*}
		\boldsymbol{v}_1=
		\begin{pmatrix}
			k-\omega +\sqrt{k^2-\omega^2}\\
			-\sqrt{k^2-\omega^2}-(k+\omega)
		\end{pmatrix},\quad
		\boldsymbol{v}_2=
		\begin{pmatrix}
			k-\omega+\sqrt{k^2-\omega^2} \\
			\sqrt{k^2-\omega^2}+k+\omega
		\end{pmatrix}.
	\end{equation*}

	Therefore, $\widetilde{\boldsymbol{\psi}}$ can be written as the composition of $\chi_1(u)\boldsymbol{v}_1+\chi_2(u)\boldsymbol{v}_2$. However, the localized solution only exists when $\omega=-k$ and
	\begin{equation}\label{chi}
		\widetilde{\boldsymbol{\psi}}=\begin{pmatrix}
			\chi(u) \\ 0
		\end{pmatrix},\quad \chi(u)=C e^{-\int_0^u m(\tau) d\tau}.
	\end{equation}
	Here $C$ is the normalized coefficient. Some relevant results can also be found in \cite{lee2019elliptic, xie2019wave}.

\begin{remark}
	
	Specifically, if $m(\bx)=\tanh x_1$, the dispersion relationship and the localized eigenfunctions are displayed in Figure \ref{fig.eigen_function}.

	\begin{figure}[h]
		\centering%
		\subcaptionbox{Eigenvalue of the Dirac operator $\mathcal{D}_{k}$, where the straight line $\omega=-k$ results in the localized mode and the shadow area contributes to the oscillation modes.\label{figh:a}}[6cm]
		{\includegraphics[width=0.57\textwidth]{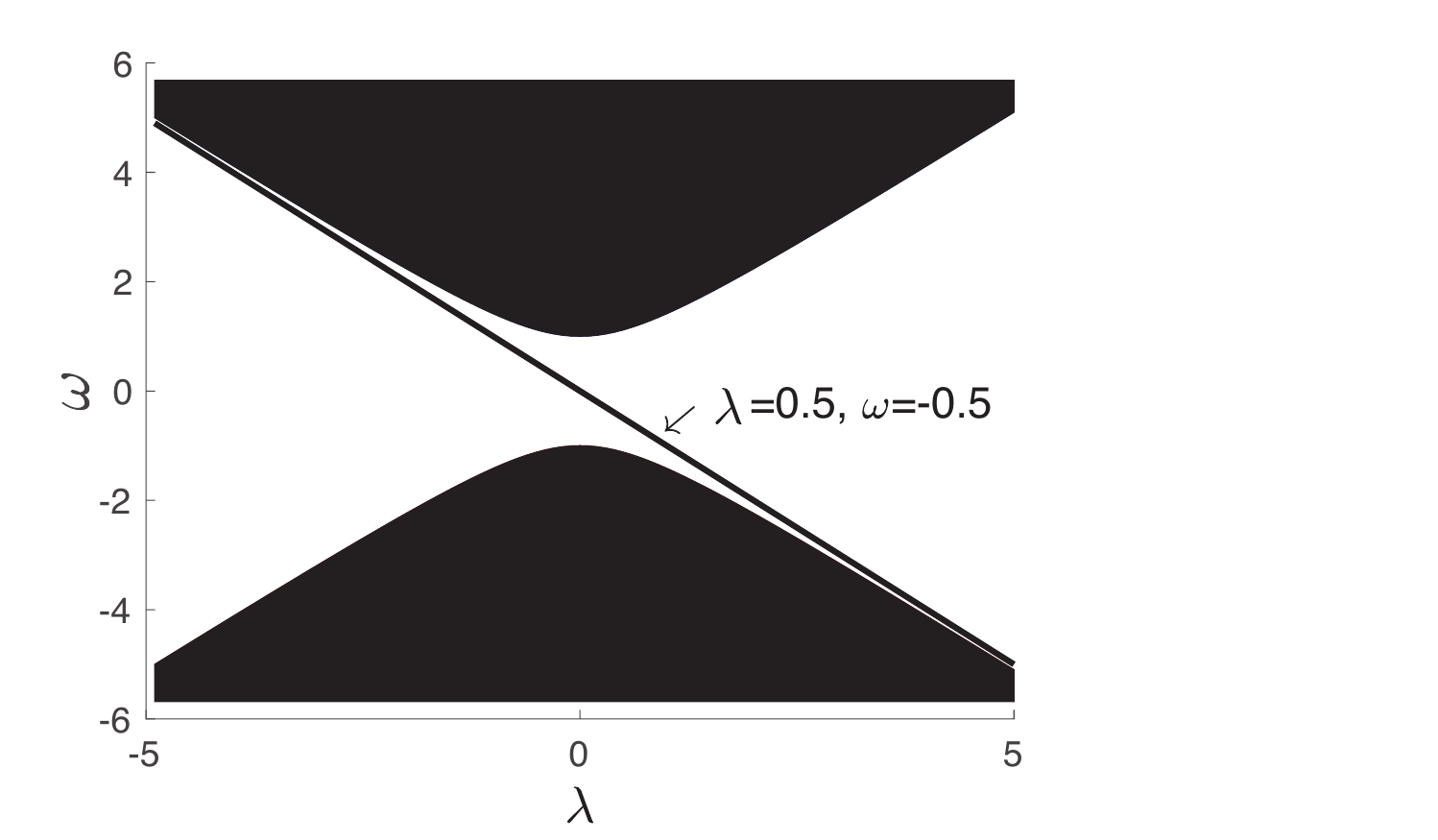}}%
		\hspace{2em}%
		\subcaptionbox{Eigenfunction at $k=0.5$\label{figh:b}}
		{\includegraphics[width=0.42\textwidth]{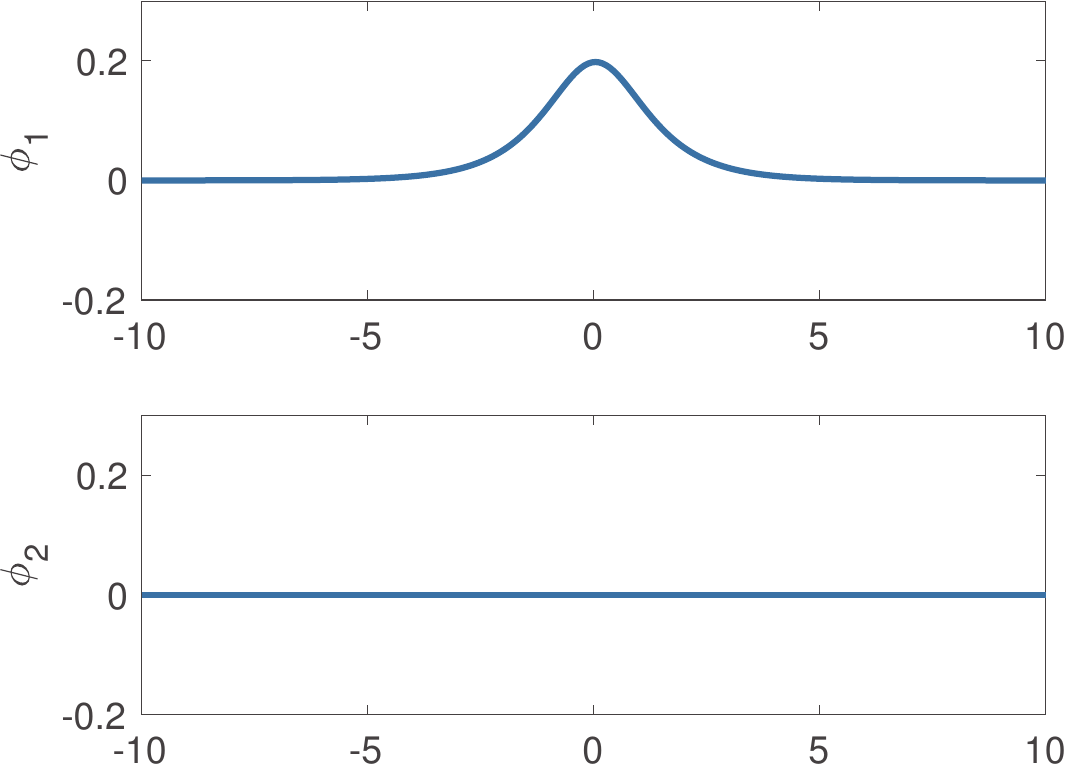}}
		\caption{The Edge Eigen for $m(\bx)=\tanh x_1$}
		\label{fig.eigen_function}
	\end{figure}
	
\end{remark}

We immediately obtain the plane wave solution of \eqref{eq.stand} as
\begin{equation}\label{eq.plane2}
	\begin{pmatrix}
		\widetilde{\psi}_1 \\
		\widetilde{\psi}_2 \\
	\end{pmatrix}=\chi(u) e^{\ii k (v- t)}
	\begin{pmatrix}
		1\\
		0\\
	\end{pmatrix} .
\end{equation}

Applying the inverse transform of \eqref{eq.trans} to \eqref{eq.plane2}, we eventually obtain the plane wave solution \eqref{eq.plane}. This completes the proof.
\end{proof}

The dispersion curve $\omega(k)$ corresponding to the localized eigenfunction is a straight line with the slope $-1$, which means the modes of the form \eqref{eq.plane} with different wavenumber $k$ has the same group velocity $v_{\text{group}}=-\frac{\partial \omega(k)}{\partial k}=1$ (note that the energy parameter $\omega$ selected here differs from the settings in physics by a negative sign). In other words, the Dirac system restricted to edge modes is dispersionless. A similar discussion as the above plane wave result carries out the localized solution in next corollary.

\begin{corollary}\label{corollary:travel}

Give the continuous function $g(\cdot)\in L^2(\br)$. If the initial input to the edge of problem \eqref{eq.line} is in the following localized form
\begin{equation*}
	\begin{pmatrix}
		\alpha_1(0, \bx)\\
		\alpha_2(0, \bx)\\
	\end{pmatrix} =\chi(\vn\cdot\bx)g(\vt\cdot\bx)
	\begin{pmatrix}
		\cos\frac{\theta}{2}\\[0.3em]
		\ii\sin\frac{\theta}{2}\\
	\end{pmatrix}.
\end{equation*}
Then, the localized traveling wave solution admits
\begin{equation}\label{eq.taveling}
	\begin{pmatrix}
		\alpha_1(t, \bx)\\
		\alpha_2(t, \bx)\\
	\end{pmatrix} = \chi(\vn\cdot\bx)g(\vt\cdot\bx-t)
	\begin{pmatrix}
		\cos\frac{\theta}{2}\\[0.3em]
		\ii\sin\frac{\theta}{2}\\
	\end{pmatrix}.
\end{equation}

\end{corollary}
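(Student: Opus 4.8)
The plan is to reduce the problem to the normalized coordinates already introduced in the proof of Proposition \ref{prop.move} and then exploit the linearity of the flow. First I would apply the same change of variables $u=\vn\cdot\bx$, $v=\vt\cdot\bx$ together with the rotation $S$ of \eqref{eq.trans}, which turns \eqref{eq.line} into the standard form \eqref{eq.stand}. A short computation shows that under $S$ the polarization vector is sent to a pure state, namely $S\left(\cos\frac{\theta}{2},\ii\sin\frac{\theta}{2}\right)^{T}=(1,0)^{T}$, so in the $(\phi_1,\phi_2)$ variables the prescribed initial datum becomes simply $\phi(0,u,v)=\chi(u)g(v)(1,0)^{T}$.

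In these coordinates the claim reduces to verifying that
\begin{equation*}
	\begin{pmatrix}\phi_1\\\phi_2\end{pmatrix}=\chi(u)\,g(v-t)\begin{pmatrix}1\\0\end{pmatrix}
\end{equation*}
solves \eqref{eq.stand}. Since the second component vanishes, the first row of \eqref{eq.stand} contributes $\ii(\partial_t+\partial_v)\big(\chi(u)g(v-t)\big)=\ii\chi(u)\big(-g'+g'\big)=0$, while the second row contributes $\big(m(u)+\partial_u\big)\big(\chi(u)g(v-t)\big)=g(v-t)\big(m(u)\chi(u)+\chi'(u)\big)$, which vanishes identically because $\chi(u)=Ce^{-\int_0^u m(s)\,ds}$ satisfies $\chi'(u)=-m(u)\chi(u)$. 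Hence the ansatz is an exact solution and at $t=0$ it matches the transformed initial condition. Transforming back with $S^{-1}=S^{*}$ and the inverse of \eqref{eq.transuv} restores the polarization vector $(\cos\frac{\theta}{2},\ii\sin\frac{\theta}{2})^{T}$ and yields the asserted formula \eqref{eq.taveling}.

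The step requiring the most care is justifying that this formula is genuinely \emph{the} solution rather than merely \emph{a} solution, and that the manipulation above is legitimate for data $g$ that is only continuous and square-integrable. I would handle uniqueness by invoking the global well-posedness and energy conservation of the Dirac flow noted after \eqref{eq.simple}: the solution operator is uniquely determined on $L^2$, so exhibiting one solution with the correct datum suffices. To make the differentiation rigorous for non-smooth $g$, the cleanest route is Fourier synthesis: writing $g(v)=\frac{1}{2\pi}\int \hat g(\lambda)e^{\ii\lambda v}\,d\lambda$ and superposing the plane-wave solutions \eqref{eq.plane} supplied by Proposition \ref{prop.move}, each mode $e^{\ii\lambda v}$ evolves into $e^{\ii\lambda(v-t)}$ because the dispersion relation $\omega=-\lambda$ gives the common group velocity $1$. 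By linearity and dominated convergence the superposition converges in $L^2$ to $\chi(u)g(v-t)$, the non-dispersive translation claimed in \eqref{eq.taveling}; the explicit substitution above then serves as the formal confirmation in the smooth case.
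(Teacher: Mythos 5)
Your argument is correct and follows essentially the same route the paper intends: after the coordinate change \eqref{eq.transuv} and the rotation \eqref{eq.trans}, the corollary reduces to the identity $\chi'=-m\chi$ together with the flat dispersion $\omega=-\lambda$ from Proposition \ref{prop.move}, which is exactly the ``similar discussion'' the paper invokes without writing out. Your extra care about uniqueness via the unitary $L^2$ flow and about merely continuous $g\in L^2(\br)$ via Fourier superposition fills in details the paper leaves implicit, but does not constitute a different method.
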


This is a one way traveling wave along the edge with the velocity $v=1$. Till now, we have built the exact traveling localized waves along a straight edge through the separation of variables. However, it is quite involved to derive the traveling waves with a general mass. If $m(f(\bx))$ is a domain wall function with the edge curve $\Gamma$ which can be locally treated as a straight line.

To be more specific, we will propose two typical edge states where the edges can be locally treated as straight lines. We exploit the longtime stable asymptotic behaviors that cling to the edges locally other than solving the wave guidance derived from ODE \cite{bal2022magnetic,bal2021edge}. In the next two sections, one kind of edge is a circular ring with a sufficiently large radius, and the other is a slowly varying curve which is generated by adding a small perturbation to the straight line at the normal direction.

\section{Quasi-traveling edge state along the circle}\label{sec4}

Suppose that the mass term remains invariantly along the angular direction in polar coordinates, i.e., the edge curve is a circle. Namely, the mass term can be described by
\begin{equation}\label{eq.circle}
	m(f(\bx))=m(\sqrt{x_1^2+x_2^2}-R),
\end{equation}
where $R\in(0,+\infty)$ indicates the radius of a circle. Let the reference system alter into the polarization coordinates if $x_1=r\cos{\theta}$, $x_2=r\sin\theta$ with $r\in[0,+\infty)$, $\theta\in[0,2\pi)$.
Without loss of generality, we assume that $\exists~r_0>0$, $|m(u)|>\frac{~m_{\infty}}{2}$ provided $|u|>r_0$.

In such a case, the Dirac equation \eqref{eq.line} under the polarization coordinates admits the following form,
\begin{equation}\label{eq.circlecase}
	\small	\ii\partial_t
	\boldsymbol{\alpha}+\widetilde{\mathcal{D}}\boldsymbol{\alpha}=\boldsymbol{0},
\end{equation}
where $\boldsymbol{\alpha}=\binom{\alpha_1}{\alpha_2}$, $\alpha_j=\alpha_j(t,r,\theta),~j=1,2$, and the new Dirac operator is
\begin{equation}
	\widetilde{\mathcal{D}}=\begin{pmatrix}
		\ii\sin\theta\partial_r+\frac{\ii}{r}\cos\theta\partial_\theta & m(r-R)-\cos\theta\partial_r+\frac{1}{r}\sin\theta\partial_\theta\\[0.3em]
		m(r-R)+\cos\theta\partial_r-\frac{1}{r}\sin\theta\partial_\theta & -\ii\sin\theta\partial_r-\frac{\ii}{r}\cos\theta\partial_\theta \\
	\end{pmatrix}.
\end{equation}
Moreover, $e^{\ii\widetilde{\mathcal{D}}t}$ also represents a Dirac group and is unitary in $L^2(\br^2)$ for all $t>0$.

Observe that the traveling wave solution to the Dirac equation \eqref{eq.line} is of the form \eqref{eq.taveling}. With a circular edge, it is not surprising that we seek for the ansatz by treating the circle as a straight line locally, i.e.,
\begin{equation}\label{eq.asymptotic}
	\begin{pmatrix}
		\alpha_1(t,r,\theta) \\
		\alpha_2(t,r,\theta) \\
	\end{pmatrix}_R
	= \phi(r)g(\theta-\frac{t}{R})
	\begin{pmatrix}
		\cos\frac{\theta}{2} \\[0.3em]
		\ii\sin\frac{\theta}{2} \\
	\end{pmatrix}.
\end{equation}
Then, we need to derive the validity of the above approximation. Plugging the right hand side of \eqref{eq.asymptotic} into the equation \eqref{eq.circlecase} deduces the residual terms as
\begin{equation}\label{id:lhs}
	 \Bigg(\ii\partial_t
	 +\widetilde{\mathcal{D}}\Bigg)
	 \begin{pmatrix}
	 	\alpha_1 \\
	 	\alpha_2 \\
	 \end{pmatrix}_R= 
	 \begin{pmatrix}
		\ii (\phi'+m\phi+\frac{1}{2r}\phi)g\sin\frac{\theta}{2}+\ii(\frac{1}{r}-\frac{1}{R})\phi g'\cos\frac{\theta}{2} \\[0.3em]
		(\phi'+m\phi+\frac{1}{2r}\phi)g\cos\frac{\theta}{2}-(\frac{1}{r}-\frac{1}{R})\phi g'\sin\frac{\theta}{2}\\		
	\end{pmatrix}:=RHS.
\end{equation}
However, the present mass term \eqref{eq.circle} could not consistent with the domain wall precisely. Some extra constraints on $\phi(r)$ are needed as $r\rightarrow 0^{+}$, otherwise the $RHS$ may lead to singularities at the origin.

Now, we give a rigorous clarification for asymptotic solution to the edge state pinned on the circle.
\begin{theorem}\label{thm1}
	Suppose that the radius $R>\max\{2 r_0, \frac{1}{~m_{_\infty}}\}$ is large enough. The Dirac equation \eqref{eq.circlecase} is spacially defined under the polar coordinates with the circular edge incurred by \eqref{eq.circle}. Moreover, $g\in C^1 [0,2\pi]$ with $g(2\pi)=g(0)$ and there exist $0<r_1<r_2<\frac12$, $\phi(r)\in C^1[0,+\infty)$ satisfies:
	\begin{enumerate}
		\item if $r\in[0, r_1 R]$, $\phi(r)=0$;
		\item if $r\in(r_1 R, r_2 R)$, $0\leqslant \phi'(r)\leqslant \max\big\{\phi'(r_2 R), \frac{2}{(r_2-r_1)R}\phi(r_2 R)\big\}$;
		\item if $r\in [r_2 R,+\infty)$, $\phi(r)=\frac{1}{\sqrt{R}~}\chi(r-R)e^{-\frac{r-R}{2R}}$, where $\chi(\cdot)$ is defined in Proposition \ref{prop.move} and $\frac{1}{\sqrt{R}~}$ is the normalized parameter in $L^2(\br^2)$.
	\end{enumerate}
	Assume that the initial condition is perfectly matched. Then, for any $t> 0$, there is a constant $C>0$ independent of $t$, $R$ such that
	\begin{equation*}
		\Bigg\|
		\begin{pmatrix}
			\alpha_1(t,r,\theta) \\
			\alpha_2(t,r,\theta)
		\end{pmatrix}
		-\begin{pmatrix}
			\alpha_1(t,r,\theta) \\
			\alpha_2(t,r,\theta)
		\end{pmatrix}_R
		\Bigg\|_{L^2(\br^2)} \leqslant C t \frac{1}{R^2}.
	\end{equation*}
	
\end{theorem}

\begin{proof}
Let $\boldsymbol{\eta}=\boldsymbol{\eta}(t,r,\theta)$ indicate the error of the asymptotic solution \eqref{eq.asymptotic}, i.e.,
\begin{equation*}
	\boldsymbol{\eta}=
	\begin{pmatrix}
		\alpha_1(t,r,\theta) \\
		\alpha_2(t,r,\theta)
	\end{pmatrix}
	-\begin{pmatrix}
		\alpha_1(t,r,\theta) \\
		\alpha_2(t,r,\theta)
	\end{pmatrix}_R.
\end{equation*}
Owing to $RHS$ given in \eqref{id:lhs}, the error $\boldsymbol{\eta}$ evolves like:
\begin{equation*}
	\mathrm{i}\partial_t\boldsymbol{\eta}+\tilde{\mathcal{D}}\boldsymbol{\eta}=-RHS.
\end{equation*}
Then, it follows from the Duhamel's principle that
\begin{equation*}
	\boldsymbol{\eta}= \mathrm{i}\int_0^t e^{\mathrm{i}\tilde{\mathcal{D}}(t-s)}RHS~ ds,\quad \forall~ t> 0.
\end{equation*}
According to the fact that $e^{\mathrm{i}\tilde{\mathcal{D}}t}$ is unitary in $L^2(\br^2)$, we can directly obtain
\begin{equation}\label{eta:1}
	\|\boldsymbol{\eta}\|_{L^2(\br^2)}\leqslant \int_0^t \|RHS\|_{L^2(\br^2)}~ ds.
\end{equation}

As it has been stated before, if $0\leqslant r\leqslant r_1 R$, $\phi(r)\equiv 0$. When $r\in(r_1 R, r_2 R)$, for any $n\geqslant 0$, it implies
\begin{equation*}
	\big|\phi'+m\phi+\frac1{2r}\phi\big|\leqslant C \frac{1}{R^n},\quad \big|(\frac1r-\frac1R)\phi\big|\leqslant C \frac{1}{R^n}.
\end{equation*}
Here the constants $C$ are independent of $R$.

If $r\geqslant r_2 R$, we have
\begin{equation*}
	\phi'+m\phi=-\frac{1}{2R}\phi.
\end{equation*}
Then, for $r\in[r_2 R, R-r_0]\cup [R+r_0,+\infty)$, it follows that
\begin{equation*}
	\big|(\frac1r-\frac1R)\phi(r)\big|\leqslant C\frac{|R-r|}{rR^{\frac32}}e^{-\frac{m_{\infty}}{2}|R-r|}.
\end{equation*}
Noting the boundedness of $\chi(\cdot)$, we claim the result below as $r\in (R-r_0, R+r_0)$,
\begin{equation*}
	\big|(\frac1r-\frac1R)\phi(r)\big|\leqslant C\frac{1}{R^{\frac52}},
\end{equation*}
which also explains the reason behind the modulated factor $e^{-\frac{r}{2R}}$ into $\phi(r)$.

Consequently, for any $t> 0$, it turns out that
\begin{align*}
	\|RHS\|_{L^2(\br^2)}^2 \leqslant&~ C\int_0^{2\pi}\int_0^{\infty} \Big[\big|(\phi'+m\phi+\frac{1}{2r}\phi)g\big|^2 +\big|(\frac{1}{r}-\frac{1}{R})\phi g'\big|^2\Big] rdrd\theta \\
	\leqslant&~ C\int_{r_1 R}^{r_2 R} \Big[\big|\phi'+m\phi+\frac{1}{2r}\phi\big|^2 +\big|(\frac{1}{r}-\frac{1}{R})\phi\big|^2\Big] rdr  + C\int_{r_2 R}^{\infty}\big|(\frac{1}{r}-\frac{1}{R})\phi\big|^2 rdr \\
	\leqslant&~ C \frac1{R^{2n}} + C \frac{1}{R^4} \\
	\leqslant&~ C \frac{1}{R^4}.
\end{align*}
Here we choose $n\geqslant2$ to ensure the estimate consistently.

According to the formula in \eqref{eta:1}, for any $t>0$, we have
\begin{equation*}
	\|\boldsymbol{\eta}\|_{L^2(\br^2)} \leqslant C t\frac{1}{R^2}.
\end{equation*}
This completes the proof.

\end{proof}

Along the radius direction, the domain wall is negative inside the circle and positive outside. Therefore, the counterclockwise traveling direction obeys the chiral property, which leave the positive mass on the right.

It is apparent that $\phi(r)$ is not exactly the same as $\chi(r-R)$. For convenience, we give the comparison between $\sqrt{R}\phi(r)$ and $\chi(r-R)$ in Figure \ref{fig.mode_compare} which shows $\chi(r-R)$ is symmetric about $r=R$ but $\phi(r)$ is not. Here and in the next section, we numerically simulate the traveling waves to support our analysis by employing the pseudo-spectral method of fourth-order Runge-Kutta time integration \cite{bao2017numerical}. With different radii, we compute the asymptotic solution \eqref{eq.asymptotic} at the same time and it indicates that the errors go like $\mathcal{O}(\frac1{R^2})$ in $L^2(\br^2)$. To display the improvement of our ansatz more intuitively, we also numerically show the setup which has no $e^{-\frac{r-R}{2R}}$ in $\phi(r)$ with errors dropping to $\mathcal{O}(\frac1{R})$ as shown in Figure \ref{fig.maxfit}. In Figure \ref{fig.circle}, it carries out the numerical simulation patterns with the radius $R=40$ at four successive times. The waves travel around circle with negligible energy leaking into the bulk as $R>0$ large enough.

\begin{figure}[htbp]
	\centering
	\includegraphics[width=0.7\textwidth]{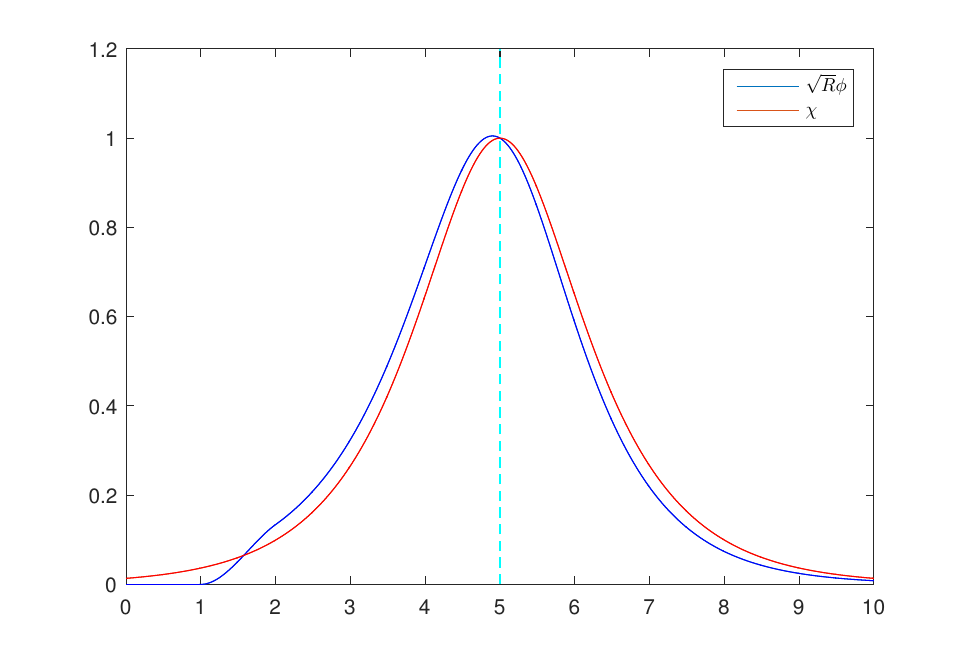}
	\caption{The graphs of quasi-circle mode $\sqrt{R}\phi(r)$ and line mode  $\chi(r-R)$ when $m(\cdot)=\tanh(\cdot)$ and $R=5$.}
	\label{fig.mode_compare}
\end{figure}

\begin{figure}[htbp]
	\centering
	\includegraphics[width=0.9\textwidth]{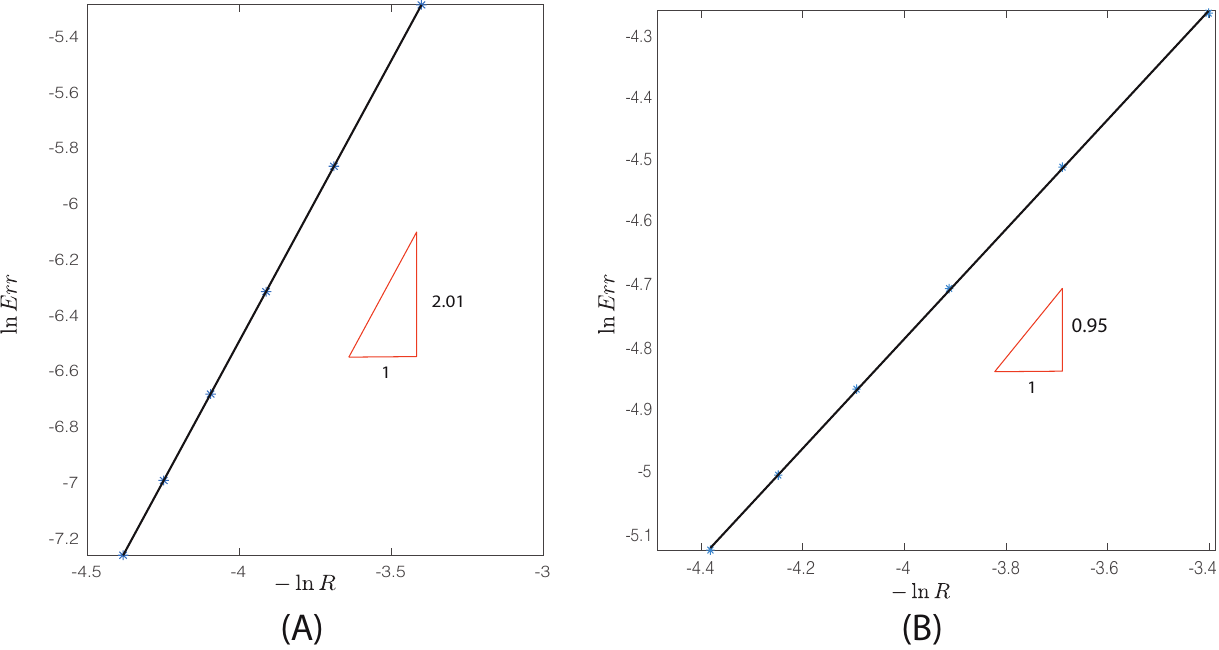}
	\caption{The $L^2$ errors for quasi-traveling edge states on circles at $t=5$.
	(A): The errors of \eqref{eq.asymptotic} is $\mathcal{O}(\frac{1}{R^2})$.
	(B): The errors of the case when $\phi(r)$ drops $e^{-\frac{r-R}{2R}}$ is $\mathcal{O}(\frac{1}{R})$.
} 
\label{fig.maxfit}
\end{figure}

\begin{figure}[htbp]
	\centering
	\includegraphics[width=0.88\textwidth]{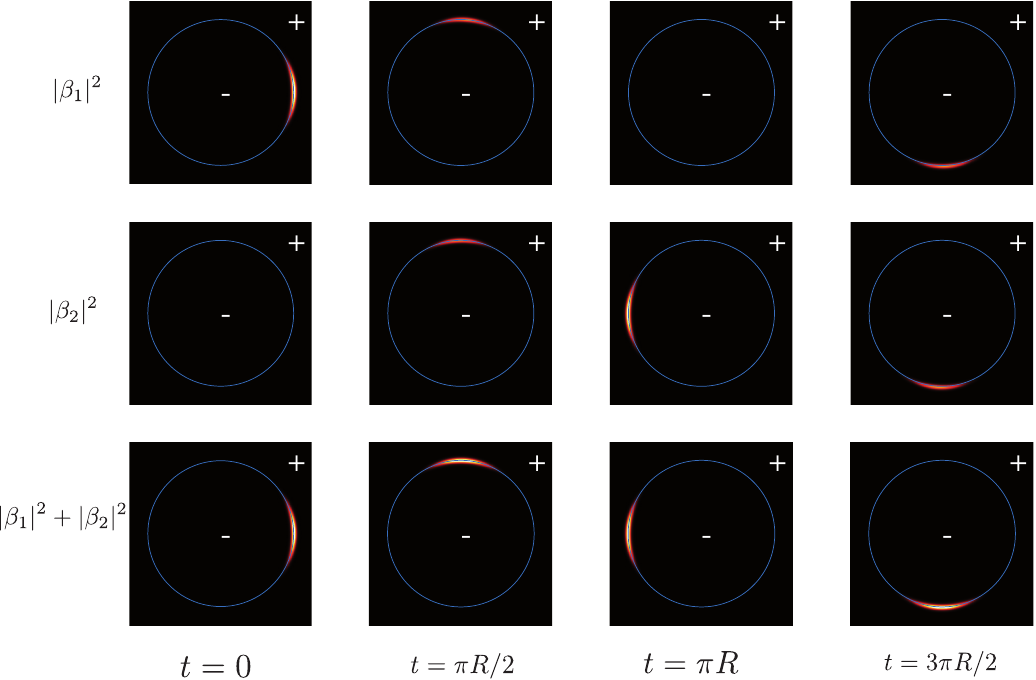}
	\caption{Quasi-traveling edge states with the circular radius $R=40$ at four successive time: $t = 0,~ \frac{\pi R}{2},~\pi R,~\frac{3\pi R}{2}$. The spots on circles from the top down are given by $|\alpha_1|^2$, $|\alpha_1|^2$ and $|\alpha_1|^2+|\alpha_2|^2$.}
	\label{fig.circle}
\end{figure}

\section{Quasi-traveling edge state along the smooth curve}\label{sec5}

In this section, we consider a family of more general edge states which highly propagate along the smooth edge curves (or interface). Observe that the straight line edge state with the rotation angle $\theta$ admits the propagating form of \eqref{eq.taveling}. In Figure \ref{fig:sc}, the smooth edge curve can be locally treated as going towards the tangent direction. It sheds some light on that the curved edge states may comply with the localized solution traveling along the tangent line of the curve. Hence, we study the quasi-traveling waves when the straight edge curve is slowly disturbed.

Assume that the edge curve is a small perturbation to the vertical line, i.e.,
\begin{equation}\label{curve2}
	\Gamma=\left\{\bx\in\br^2: x_1+h(\veps x_2)=0\right\}.	
\end{equation}
Here $0<\veps\ll1$ and $h(\veps x_2)$ indicates the small perturbation to the straight line. Other more general edge curves can be treated similarly by coordinate rotation. After that, the Dirac operator with the domain wall function $m\big(x_1+h(\veps x_2)\big)$ is in the form of
\begin{equation*}
	\mathcal{D}^{\veps}=\begin{pmatrix}
		\ii\partial_{x_2} & m\big(x_1+h(\veps x_2)\big)-\partial_{x_1}\\
		m\big(x_1+h(\veps x_2)\big)+\partial_{x_1}& -\ii\partial_{x_2}
	\end{pmatrix}.
\end{equation*}
Therefore, the Dirac equation \eqref{eq.line} alters into
\begin{equation}\label{eq.simpleasym}
	\ii\partial_t\begin{pmatrix}
		\alpha_1 \\
		\alpha_2 \\
	\end{pmatrix}+\mathcal{D}^{\veps}
	\begin{pmatrix}
		\alpha_1 \\
		\alpha_2 \\
	\end{pmatrix}=0.
\end{equation}
Under this setup, we move forward to establish the smooth edge states in an asymptotic way. Recalling the edge curve equation in \eqref{curve2} generates the unit normal and tangent vectors at each point on the curve:
\begin{align*}
	\vn&=\frac{1}{\sqrt{1+\veps^2 h'^2(\veps x_2)}~}\begin{pmatrix}
		1\\
		\veps h'(\veps x_2)
	\end{pmatrix}=\begin{pmatrix}
		1\\
		\veps h'(\veps x_2)
	\end{pmatrix}+\mathcal{O}(\veps^2),\\
	\vt&=\frac{1}{\sqrt{1+\veps^2 h'^2(\veps x_2)}~}\begin{pmatrix}
		-\veps h'(\veps x_2) \\
		1
	\end{pmatrix}=\begin{pmatrix}
		-\veps h'(\veps x_2) \\
		1
	\end{pmatrix}+\mathcal{O}(\veps^2).
\end{align*}

With the help of the straight line solutions \eqref{eq.taveling}, it is not surprising to develop an analogous result traveling along the slowly varying edge. From the curve function \eqref{curve2} and the above tangent vector $\vt$, it yields the asymptotic solution as follows:
\begin{equation}\label{solu:curve}
	\begin{pmatrix}
		\alpha_1 \\
		\alpha_2 \\
	\end{pmatrix}_{\veps}= \chi\bigl(x_1+h(\veps x_2)\bigr) g\bigl(-\veps h'(\veps x_2)(x_1+h(\veps x_2))+x_2-t\bigr)
	\begin{pmatrix}
		1 \\
		\frac{\ii}{2}\veps h'(\veps x_2) \\
	\end{pmatrix}.
\end{equation}
The validity of this construction can be demonstrated up to $\mathcal{O}(\veps^2)$ below.
\begin{theorem}\label{thm.simple}
	
	Let $0<\veps\ll 1$, $\chi(\cdot)$ be denoted as \eqref{chi} and  $g(\cdot)\in \mathcal{S}(\br)$. The domain wall function $m(\cdot)$ with the edge curve $\Gamma$ is defined by \eqref{curve2}. The edge perturbation $h(\cdot)\in C^2(\br)$ and $h'(\cdot),~h''(\cdot)$ are both bounded on $\br$. Suppose that the initial condition is perfectly matched, i.e., $\binom{\alpha_1}{\alpha_2}(0, \bx)=\binom{\alpha_1}{\alpha_2}_{\veps}(0, \bx)$. Then for any $t> 0$,
	\begin{equation*}
		\Bigg\|\begin{pmatrix}
			\alpha_1 \\
			\alpha_2 \\
		\end{pmatrix}(t, \bx)-\begin{pmatrix}
			\alpha_1 \\
			\alpha_2 \\
		\end{pmatrix}_{\veps}(t, \bx)\Bigg\|_{L^2(\mathbb{R}^2)}< C t\veps^2.
	\end{equation*}
	Here $C$ is a generic constant independent of $t$ and $\veps$.
\end{theorem}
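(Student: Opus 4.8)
The plan is to reproduce the Duhamel/energy argument of Theorem~\ref{thm1}. Denote by $\boldsymbol{\beta}_\veps$ the ansatz \eqref{solu:curve} and set $\boldsymbol{\eta}(t,\bx)=\binom{\beta_1}{\beta_2}(t,\bx)-\boldsymbol{\beta}_\veps(t,\bx)$; the matched initial data give $\boldsymbol{\eta}(0,\bx)=\boldsymbol 0$. Substituting $\boldsymbol{\beta}_\veps$ into \eqref{eq.simpleasym} produces a residual $\mathcal{R}_\veps:=\ii\partial_t\boldsymbol{\beta}_\veps+\mathcal{D}^{\veps}\boldsymbol{\beta}_\veps$, so that $\ii\partial_t\boldsymbol{\eta}+\mathcal{D}^{\veps}\boldsymbol{\eta}=-\mathcal{R}_\veps$ with vanishing initial value. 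Since $\mathcal{D}^{\veps}$ is (formally) self-adjoint, its propagator $e^{\ii\mathcal{D}^{\veps}t}$ is unitary on $L^2(\br^2)$, and Duhamel's principle gives $\boldsymbol{\eta}(t)=\ii\int_0^t e^{\ii\mathcal{D}^{\veps}(t-s)}\mathcal{R}_\veps(s)\,ds$, whence $\|\boldsymbol{\eta}(t)\|_{L^2(\br^2)}\leqslant\int_0^t\|\mathcal{R}_\veps(s)\|_{L^2(\br^2)}\,ds$. Everything therefore reduces to the uniform-in-time estimate $\|\mathcal{R}_\veps(s)\|_{L^2(\br^2)}\leqslant C\veps^2$.

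The second step is to compute $\mathcal{R}_\veps$ and exhibit the cancellations. Writing $\xi=x_1+h(\veps x_2)$, $\psi=-\veps h'(\veps x_2)\,\xi+x_2-t$, and abbreviating $h'=h'(\veps x_2)$, $h''=h''(\veps x_2)$, I would record the chain-rule identities $\partial_{x_1}\xi=1$, $\partial_{x_2}\xi=\veps h'$, $\partial_{x_1}\psi=-\veps h'$, $\partial_{x_2}\psi=1-\veps^2h''\xi-\veps^2(h')^2$, $\partial_t\psi=-1$, and apply $\ii\partial_t+\mathcal{D}^{\veps}$ to $\boldsymbol{\beta}_\veps=\chi(\xi)g(\psi)\binom{1}{\frac{\ii}{2}\veps h'}$. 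The key observation is that the two $O(\veps)$ corrections built into the ansatz---the phase shift $-\veps h'\xi$ in the argument of $g$ and the lower spinor entry $\frac{\ii}{2}\veps h'$---are exactly those required to annihilate every $O(1)$ and $O(\veps)$ term. Using $\chi'(\xi)=-m(\xi)\chi(\xi)$ from \eqref{chi}, the combination $\frac{\ii}{2}\veps h'(\chi'+m\chi)g$ in the first component and $(\chi'+m\chi)g$ in the second component both vanish, while the transport terms $\pm\ii\chi g'$ and $\pm\frac12\veps h'\chi g'$ are killed by $\ii\partial_t\boldsymbol{\beta}_\veps$. What remains is genuinely quadratic,
\begin{equation*}
\mathcal{R}_\veps=\veps^2\begin{pmatrix}-\ii\bigl(h''\xi+\tfrac12(h')^2\bigr)\chi(\xi)g'(\psi)\\[0.4em]\tfrac12 h''\chi(\xi)g(\psi)+\tfrac12(h')^2\chi'(\xi)g(\psi)\end{pmatrix}+O(\veps^3),
\end{equation*}
each surviving term being a bounded function of $\veps x_2$ (through $h',h''$) multiplied by one of $\chi,\xi\chi,\chi'=-m\chi$, all exponentially localized in $\xi$, times $g$ or $g'$ evaluated at $\psi$.

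The last step---which I expect to be the main obstacle---is the $L^2$ bound on $\mathcal{R}_\veps$. A term-by-term estimate that first integrates $|\chi(\xi)|^2$ in $x_1$ fails, because the leftover integrand is merely bounded (not integrable) in $x_2$; one must use the tangential decay of $g$. I would therefore change variables $(x_1,x_2)\mapsto(\xi,\psi)$, whose Jacobian is $\partial(\xi,\psi)/\partial(x_1,x_2)=1-\veps^2h''\xi$. Since $h''$ is bounded, this is $\geqslant\frac12$ on $\{|\xi|\lesssim\veps^{-1}\}$ for $\veps$ small, and the exponential decay $\chi(\xi)=O(e^{-m_\infty|\xi|})$ makes the complementary region contribute only an exponentially small amount. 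In these variables each residual term factorizes as $\veps^2$ times a bounded amplitude, times a function in $L^2_\xi$ (namely $\chi$, $\xi\chi$, or $m\chi$), times $g$ or $g'\in L^2_\psi$ (both integrable since $g\in\mathcal{S}(\br)$); hence $\|\mathcal{R}_\veps\|_{L^2(\br^2)}\leqslant C\veps^2$, with the constant depending only on $\|h'\|_\infty$, $\|h''\|_\infty$, and the $L^2$ norms of $\chi,\xi\chi,m\chi,g,g'$, and independent of $s$ because the $s$-dependence enters only through the harmless shift $\psi\mapsto\psi-s$. Inserting this into the Duhamel inequality yields $\|\boldsymbol{\eta}(t)\|_{L^2(\br^2)}\leqslant Ct\veps^2$. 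The two delicate points are keeping the Jacobian bounded away from zero (resolved by the localization of $\chi$) and carefully verifying that the algebraic cancellations remove all sub-quadratic contributions before estimating.
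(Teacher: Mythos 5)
Your proposal is correct and follows the paper's proof in all essentials: the same Duhamel-plus-unitarity reduction, the same residual (your $\mathcal{R}_\veps$ agrees exactly with the paper's right-hand side, including the sign convention, once the $-\tfrac12\veps^2$ prefactor is distributed), and the same two ingredients---exponential localization of $\chi$ in the normal variable and Schwartz decay of $g$ in the tangential variable---for the $L^2$ bound. The only genuine divergence is in the final integrability step. The paper uses the unimodular shear $y_1=x_1+h(\veps x_2)$, $y_2=x_2$ (Jacobian identically $1$) and then splits the $y_2$-integration into $\{|y_2-t|>2C_1\veps|y_1|\}$, where $|g|^2\lesssim(1+|\tfrac{y_2-t}{2}|^2)^{-1}$ is integrable, and the complementary strip of width $O(\veps|y_1|)$, where $\veps|y_1|\chi^2(y_1)$ is integrable; this avoids any Jacobian or injectivity issue. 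Your full change to $(\xi,\psi)$ coordinates separates the variables more cleanly and factorizes each term as ($L^2_\xi$ function)$\times$($L^2_\psi$ function), but at the price of the Jacobian $1-\veps^2 h''\xi$, for which you must (and do) invoke the localization of $\chi$ to discard the region where it could degenerate; strictly you also need global injectivity of the map there, which follows from the triangular structure ($x_1\mapsto\xi$ at fixed $x_2$, then $x_2\mapsto\psi$ at fixed $\xi$ with derivative $1-\veps^2h''\xi\geqslant\tfrac12$), a point worth one extra line. Both routes yield $\|\mathcal{R}_\veps\|_{L^2(\br^2)}\leqslant C\veps^2$ uniformly in $s$ and hence the stated $Ct\veps^2$ bound.
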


\begin{proof}
	
Let the error $\boldsymbol{\eta}(t,\bx)=\binom{\alpha_1}{\alpha_2}(t,\bx)-\binom{\alpha_1}{\alpha_2}_{\veps}(t,\bx)$. Substituting the above formal solution \eqref{solu:curve} into \eqref{eq.simpleasym}, the evolution of $\boldsymbol{\eta}(t,\bx)$ arrives at
	\begin{equation*}
		\ii\partial_t\boldsymbol{\eta}+\mathcal{D}^{\veps}\boldsymbol{\eta} = -\frac12\veps^2
		\begin{pmatrix}
			-\ii 2(x_1+h)\chi g' h''-\ii \chi g'{h'}^2 \\
			\chi g h''+\chi'g{h'}^2-\veps (x_1+h) \chi g' h'h''-\veps \chi g'{h'}^3
		\end{pmatrix}.
	\end{equation*}
	By employing the same procedure in previous circular edge arguments, it suffices to estimate the above residuals.
	
	For convenience, we employ the coordinate transformation by letting $y_1=x_1+h(\veps x_2)$, $y_2=x_2$, and then the Jacobi determinant is identically equal to $1$. Assume that $|h'(\cdot)|\leqslant C_1$ uniformly on $\br$. For any $t> 0$, we firstly build the following estimate:
	\begin{align*}
		& \int_{\mathbb{R}^2}\Big|\chi(y_1) g\bigl(-\veps y_1 h'(\veps y_2)+y_2-t\bigr)\Big|^2 d\by \\
		= &~\int_{\mathbb{R}} \Big( \int_{|y_2-t|>2C_1\veps|y_1|}+\int_{|y_2-t|\leqslant 2C_1\veps|y_1|}\Big) \Big|\chi(y_1) g\bigl(-\veps y_1 h'(\veps y_2)+y_2-t\bigr)\Big|^2 d y_2 d y_1 \\
		\leqslant &~ C\int_{\mathbb{R}}\int_{|y_2-t|>2C_1\veps|y_1|}\chi^2(y_1) \frac{1}{1+|\frac{y_2-t}{2}|^{^2}} dy_2 dy_1+C \int_{\mathbb{R}}\int_{|y_2-t|\leqslant 2C_1\veps|y_1|} \chi^2(y_1) d y_2 d y_1 \\
		\leqslant &~C\int_{\mathbb{R}^2}\chi^2(y_1) \frac{1}{1+|\frac{y_2-t}{2}|^{^2}} d\by+C \int_{\mathbb{R}}\veps |y_1| \chi^2(y_1) d y_1 \\
		\leqslant &~C<+\infty.
	\end{align*}
	Here we use the rapidly decreasing property of $\chi(\cdot)$ and $g(\cdot)$. Hence, for any $t> 0$, $\binom{\alpha_1}{\alpha_2}_{\veps}(t,\bx)\in L^2(\br^2)^2$ and a similar strategy gives estimates to the residual,
	\begin{align*}
		&~\Bigg\|\begin{pmatrix}
			-\ii 2(x_1+h)\chi g' h''-\ii \chi g'{h'}^2 \\
			\chi g h''+\chi'g{h'}^2-\veps (x_1+h) \chi g' h'h''-\veps \chi g'{h'}^3
		\end{pmatrix}\Bigg\|_{L^2(\br^2)} \\
		=&~\Bigg\|\begin{pmatrix}
			-\ii 2y_1 \chi g'h''-\ii\chi g'{h'}^2 \\
			\chi g h''+\chi'g{h'}^2-\veps y_1 \chi g' h'h''-\veps \chi g'{h'}^3
		\end{pmatrix}\Bigg\|_{L^2(\br^2)}<+\infty.
	\end{align*}
	
	Thanks to the fact that $e^{\mathrm{i}\mathcal{D}^{\veps}t}$ is unitary in $L^2(\br^2)$, we can directly move forward to conclude that
	\begin{equation*}
		\|\boldsymbol{\eta}(t,\bx)\|_{L^2(\br^2)}< Ct\veps^2,\quad \forall~t>0,
	\end{equation*}
	where $C$ is a generic constant.
	
\end{proof}

\begin{figure}[htbp]
	\centering
	\includegraphics[width=0.46\textwidth]{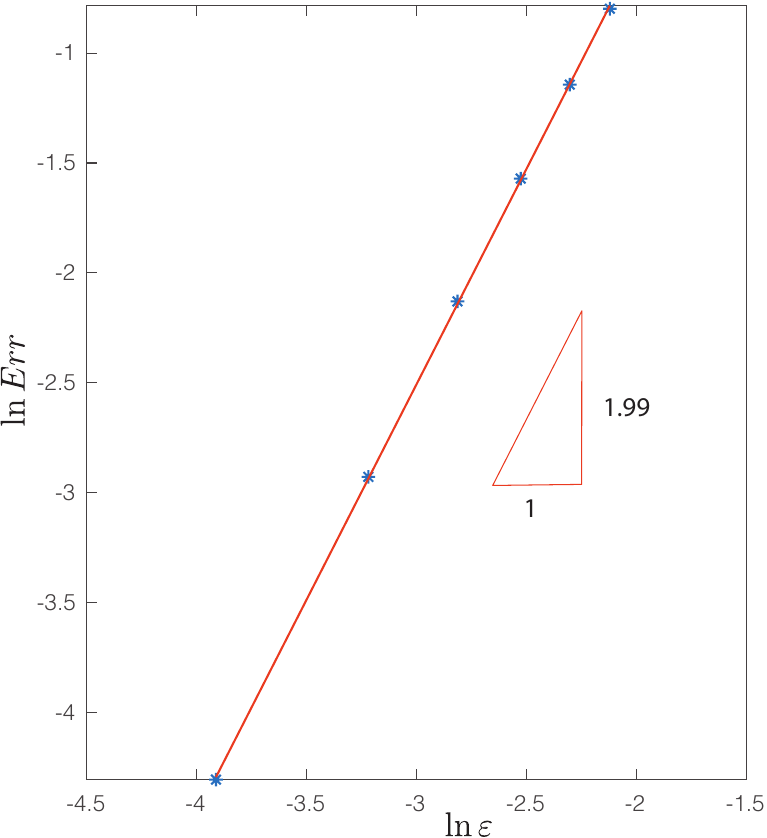}
	\caption{The $L^2$ error of the quasi-traveling edge state along the smooth curve is $\mathcal{O}(\veps^2)$ at $t=5$.}\label{fig.curveerr}
\end{figure}

\begin{figure}[htbp]
	\centering
	\includegraphics[width=0.86\textwidth]{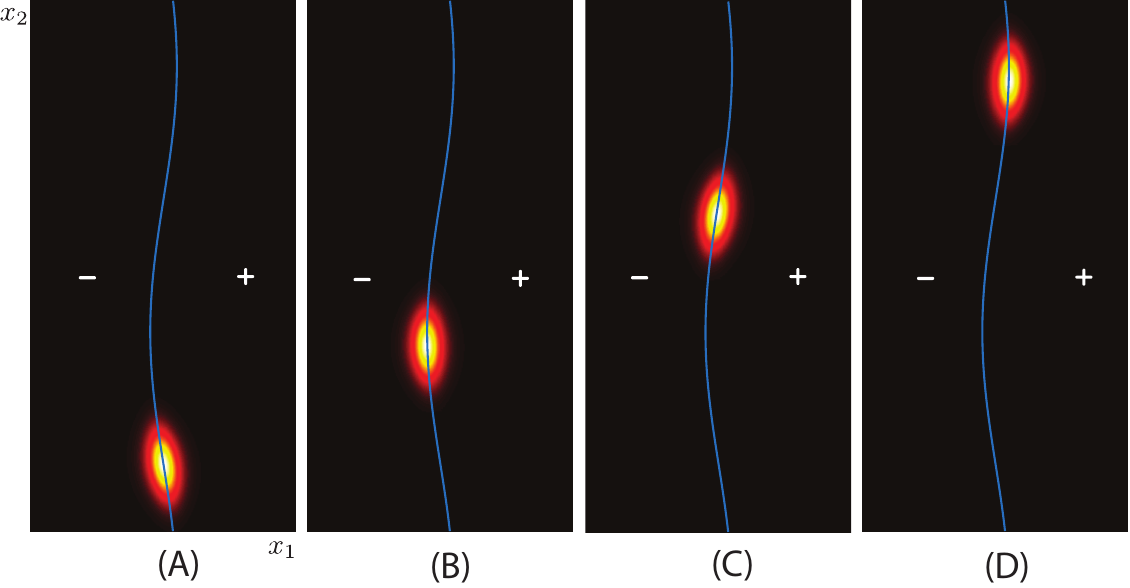}
	\caption{Let $\veps=0.2$, the snapshots show energies of traveling waves along the smooth-curved edge $x_1=-\sin(0.2x_2)$ in the domain $[-10,10]\times[-10\pi, 10\pi]$ at four successive time: $t=0,~10,~15,~20$.}\label{fig.sin}
\end{figure}

In Figure \ref{fig.curveerr} and \ref{fig.sin}, we also numerically show the error dependence on the curvature and simulating patterns when the edge curve is $x_1=-\sin(0.2 x_2)$. From the above theorem, we establish the quasi-traveling edge state along the smooth-curved edge which arises from a perturbation to a straight line. Similarly, edge states traveling along arbitrary slowly varying curves also could be extended by the same coordinate transformation shown in the arguments of Proposition \ref{prop.move}.

\section{Conclusion}

By defining the domain wall mass terms, we studied the topologically protected edge states via the Dirac equation with such generic masses. In this work, the traveling edge state tracking a straight line unidirectionally and keeps its shape along with the movement, which also is related to the chiral property. This peculiar feature of the explicit solution gives an insight into investigating the Dirac equation with more general smooth edges. The edge state moving along a varying edge will be very robust provided that the edge curvature is sufficiently small and there is negligible energy leaking into the bulk. To explain this subtle phenomenon, we introduced two typical edges which one is a large circle and the other is obtained by the small perturbation to a straight line. The asymptotic solution ansatz is derived by accepting the partial edge curve as the straight line and modulating the corresponding solution. Our rigorous study and numerical simulation demonstrated the edge states remain almost unchanged and highly concentrated on the slowly varying edge curves over a long time.

\section{Acknowledgements}

This work was partially supported by the National Natural Science Foundation of China (11871299). P.X. would acknowledge the support from Professor Hai Zhang and Department of Mathematics at HKUST.


\small

\end{document}